\definecolor{labelkey}{rgb}{0.6,0,1}
\theoremstyle{plain}
\newtheorem{theorem}{Theorem}[section]
\newtheorem{lemma}[theorem]{Lemma}
\newtheorem{assumptions}[theorem]{Assumptions}
\theoremstyle{definition}
\newtheorem{definition}[theorem]{Definition}
\def\bhyp#1{\begin{equation}\label{#1}\begin{array}{c}}
\def\ehyp{\end{array}\end{equation}}
\newcounter{cst}
\theoremstyle{remark}
\newtheorem{remark}[theorem]{Remark}
\numberwithin{equation}{section}
\numberwithin{figure}{section}
\newcommand{\RR}{{\mathbb R}}
\newcommand{\NN}{{\mathbb N}}
\newcommand{\cB}{{\mathcal B}}
\newcommand{\cF}{{\mathcal F}}
\def\O{\Omega}
\def\dsp{\displaystyle}
\def\bfn{\mathbf{n}}
\def\disc{{\mathcal D}}
\def\mesh{{\mathcal M}}
\def\edge{\sigma}
\def\dr{\partial}
\def\bvarphi{\boldsymbol{\phi}}
\newcommand{\x}{\pmb{x}}
\def\cA{\mathcal{A}}
\def\bphi{\boldsymbol{\varphi}}
\def\bxi{\boldsymbol{\xi}}
\def\cE{\mathbb E}
\newif\ifcorr\corrtrue
\definecolor{violet}{rgb}{0.580,0.,0.827}
\newcommand{\ud}{\, \mathrm{d}} 
\def\div{\mathop{\rm div}}
\title[Analysis of schemes for convection-diffusion-reaction equations]{Error Estimates for Non Conforming Discretisation of Time-dependent Convection-Diffusion-Reaction Model}
\author{Hasan Alzubaidi}
\address[Hasan Alzubaidi]{Department of Mathematics, Al-Qunfudah University College, Umm Al-Qura University, Saudi Arabia}
\email{hmzubaidi@uqu.edu.sa}
\author{Yahya Alnashri}
\address[Yahya Alnashri]{Department of Mathematics, Al-Qunfudah University College, Umm Al-Qura University, Saudi Arabia}
\email{yanashri@uqu.edu.sa}
\subjclass[2010]{35K57,65N12,65M08}
\keywords{Convection-Diffusion-Reaction problems, a gradient discretisation method (GDM), error estimates, existence of weak solutions, convergence analysis, mixed finite volume methods, generic meshes.}
\date{\today}
\begin{document}

\begin{abstract}
We use a generic framework, namely the gradient discretisation method (GDM), to propose a unified numerical analysis for general  time-dependent convection-diffusion-reaction models. We establish novel results for convergence rates of numerical approximations of such models under reasonable assumptions on exact solutions, and prove the existence and uniqueness of the approximate solution for suitably small time steps. The main interest of our results lies in covering several approximation methods and various applications of the considered model such as the generalised Burgers-Fisher (GBF) and the generalised Burgers-Huxley (GBH) models.
Numerical tests based on the hybrid mimetic mixed (HMM) method for the GBF model are performed on various types of general meshes to examine the accuracy of the proposed gradient scheme. The results confirm our theoretical rates of convergence, even on mesh with extreme distortions.
\end{abstract}

\maketitle

\section{Introduction}
\label{sec-intro}
In this paper, we design and analyse an approximation of a solution to the time-dependent convection-diffusion-reaction model, which is of the following strong form:
\begin{equation}\label{eq-conv-1}
\begin{aligned}
\dr_t \bar c(\x,t)-\lambda\div(\nabla \bar c(\x,t))&+\cA(g(\bar c),\nabla\bar c)\\
&=f(\bar c), \quad (\x,t) \in \O \times (0,T),
\end{aligned}
\end{equation}
with the following homogeneous Dirichlet boundary and initial conditions:
\begin{equation}\label{eq-conv-2}
\bar c(\x,t)=0, \quad (\x,t) \in \dr\O \times(0,T),
\end{equation}
\begin{equation}\label{eq-conv-3}
\bar c(\x,0)=c_0, \quad \x \in \O.
\end{equation}

The model \eqref{eq-conv-1}--\eqref{eq-conv-3} covers many prototype mathematical models such as the generalised Burgers-Fisher (GBF) and the generalised Burgers-Huxley (GBH) models that describe various physical phenomena arising in different scientific fields including mathematical biology, plasma physics, fluid dynamics, financial mathematics, elasticity and heat conduction, transport phenomena, neuroscience and among others \cite{B3,B4}. In our numerical applications, we consider the generalised Burgers-Fisher model which has the properties of convective phenomenon from Burgers equation \cite{A1} and having diffusion transport as well as reactions kind of characteristics from Fisher equation \cite{A2}. 
\paragraph{}
 Generally, the lack of an analytical solution of the model \eqref{eq-conv-1}--\eqref{eq-conv-3} in its general setting leads researchers to seek numerical solutions that are accurate and reliable.  The existing literature has used a range of mathematical approaches to investigate and approximate the solutions of this type of model, especially its specific instances, such as the GBF and GBH models. 
 For example, with focusing on recent literature, Javidi in 2006 \cite{A3} studied spectral 
collocation method for the solution of the GBF equation. In 2008, Darvishi et al. \cite{A4} constructed a spectral collocation method from Chebyshev–Gauss–Lobatto collocation points to examine the GBH model.  Golbabai and Javidi in 2009, used a spectral domain decomposition approach based on Chebyshev polynomials to solve the GBF model \cite{A5}. 
  Sari et al. \cite{A6} solved the GBF problem in 2010 using a compact finite difference method that requires very low computing power. In 2011, the GBH model has been analysed using a three-step    Taylor–Galerkin finite element scheme \cite{A7} and an efficient finite difference approach \cite{A8}.  Tatari et al. \cite{A9} in 2012, implemented a collocation method based on radial basis for solving the GBF equation. Ervin et al. \cite{A10}  presented numerical solutions based on finite element methods in 2015 that can provide bounded and non-negative solutions to the GBH equation. 
   The GBH model was solved using a Chebyshev wavelet collocation technique in 2016 by Çelik \cite{A11}, and Chandraker et al. \cite{A12} proposed two implicit finite difference schemes to solve the GBF equation. 
Macías-Díaz and Gonzalez presented an exact finite-difference technique in 2017 \cite{A13} to obtain the bounded and positive solutions of the classical Burgers-Fisher equation.
 Moreover, the classical BF problem was approximated and solved using finite elements by Yadav and Jiwari  \cite{A14}, who also achieved a priori error estimates and convergence of semi-discrete solutions. 
 The GBF model was solved by Namjoo et al. in 2018 using a non-standard finite-difference method, and the positivity, consistency, and boundedness of the scheme were further discussed  \cite{A15}. 
In 2019, Alinia and Zarebnia \cite{A16} used a numerical approach with a tension hyperbolic-trigonometric B-spline scheme for solving the GBH model. 
Followed by Hussain and Haq \cite{A17} in 2020, who proposed  a meshfree spectral interpolation technique combined with Crank–Nicolson difference scheme to solve the GBF equation. 
With a discontinuous, nonconforming, and conforming Galerkin finite element method, the stationary GBH was examined in 2021 \cite{A18}.  
 Additionally, using a Faedo-Galerkin approximation approach, Mohan and Khan \cite{A19} established the existence and uniqueness of a global weak solution of the GBH model. 
 
  An improvised collocation method using cubic B-splines was employed by Shallu and Kukreja \cite{A20} in 2022 to obtain accurate solutions for the GBH model.
  Chin \cite{A21} later developed an effective numerical method in 2023, which utilized a combination of the Galerkin method in the space variables and the non-standard finite difference method in the time variables.
    
  Recently, the solutions of the classical BF model have been analysed  and approximated using a modified version of the finite element method known as virtual element method in 2024 \cite{A22}, and using a three-level linearised finite difference schemes in 2025 \cite{A23}.  
\paragraph{}
It is worth mentioning that the analysis discussed in the existence literature, where the above list is just a sample,  focused on the conforming numerical techniques of the model \eqref{eq-conv-1}--\eqref{eq-conv-3}, or on its particular cases such as the GBF and GBH equations. This motivates us to present a generic nonconforming discretisation of the studied model in its general form using  a generic framework analysis known as a gradient discretisation method (GDM). The GDM is an abstract framework to construct a unified convergence analysis of numerical schemes for different types of partial differential equations. It covers a variety of conforming and non conforming numerical methods, for instance, conforming, non-conforming and mixed finite elements methods, hybrid mimetic mixed methods, SUSHI scheme, mixed finite volumes, nodal mimetic finite differences, and multi-points flux approximation method. We refer the reader to \cite{B1} for more details.
\paragraph{} 
Key contributions of the current work are as follows:  
\begin{itemize}
\item \textit{We provide general error estimates for generic approximations of the model \eqref{eq-conv-1}--\eqref{eq-conv-3} using the GDM. To the best of our knowledge, these results appear to be novel, particularly in the context of nonconforming discretizations. Unlike previous studies, which often focus on specific forms of convection and reaction terms and use specific numerical methods, our analysis addresses the model in its general form, and can be applied to a wide range of schemes that fit within the GDM framework.}
\item \textit{The GBF and GBH equations, which arise in numerous physical and biological applications, fall within the scope of the time-dependent convection-diffusion-reaction model \eqref{eq-conv-1}--\eqref{eq-conv-3}. While earlier works typically examine such models in isolation, our approach provides a general numerical treatment applicable across different variants and applications, including a focused numerical study on the GBF model.} 
\item \textit{ The GDM framework analysis which is used to design a complete numerical analysis for the studied model, covers many conforming and non-conforming classical methods. Up to the best of our knowledge, the previous works only dealt with conforming methods that are unable to maintain the physical properties for the studied model, particularly on some kind of general meshes.
In contrast, our implementation of the hybrid mimetic mixed (HMM) method, a finite volume technique, demonstrates robust performance. We validate its effectiveness through numerical experiments on four different generic mesh types, including  meshes with extreme distortions.}
\end{itemize}

The organisation of this paper is structured as follows.  Section \ref{sec-weak} is devoted to the weak formulation of the problem and the approximation method. In Section \ref{sec-theorem}, we state and prove the main results, the error estimates. We develop a new technique to deal with general non linear reaction and convection terms. Section \ref{sec-numerical} contains some numerical experiments using the HMM method for a prototype example of the studied model known as the generalised Burgers-Fisher (GBF) model. The tests are performed on four different general meshes and the resultant relative errors with respect to the mesh size are reported.


\section{Variational formulation and an approximate scheme}\label{sec-weak}

\begin{assumptions}\label{assump-rm}
The assumptions on the model data are the following:
\begin{itemize}
\item the domain $\O$ is an open bounded connected subset of $\RR^d\; (d > 1)$ with a boundary $\dr\O$, and $T>0$,
\item the diffusion coefficient $\lambda >0$,
\item the non linear operator $\cA(\cdot,\cdot):\RR \times \RR^d \to \RR$ is a Lipschitz continuous with a positive constants $\ell_1$, i.e. for all $\psi_1,\psi_2 \in L^2(0,T;L^2(\O))$ and $\bphi_1,\bphi_2 \in L^2(0,T;L^2(\O)^d)$, the following holds:
\[
\begin{aligned}
\|\cA(\psi_1,\bphi_1)&-\cA(\psi_2,\bphi_2)\|_{L^2(0,T;L^2(\O)) \times L^2(0,T;L^2(\O)^d)}\\
&\leq \ell_1\Big( \|\psi_1 - \psi_2\|_{L^2(0,T;L^2(\O))}
+\|\bphi_1 - \bphi_2\|_{L^2(0,T;L^2(\O)^d)} \Big),
\end{aligned}
\] 
\item the non linear convection and reaction functions $g$ and $f$ are Lipschitz continuous with positive constants $\ell_2$ and $\ell_3$, respectively
\item the initial solution $c_0 \in L^2(\O)$.
\end{itemize}
\end{assumptions}

In what follows, we denote by $\langle \cdot,\cdot \rangle_H$ and $\langle \cdot,\cdot \rangle$ the duality product between $H^1(\O)$ and $H^{-1}(\O)$ and the the scalar product in $L^2(\O)$ or in $L^2(\O)^d$, respectively. 

Let Assumptions \ref{assump-rm} hold. The problem \eqref{eq-conv-1}--\eqref{eq-conv-3} admits a weak solution if we find a function $\bar c \in L^2(0,T;H_0^1(\O))$ such that $\dr_t\in L^2(0,T;H^{-1}(\O))$ and $\bar c(0)=c_0$, and for a.e. $t\in [0,T]$, the following equality is satisfied:
\begin{equation}\label{eq-weak-model}
\begin{aligned}
\langle \dr_t \bar c,\varphi \rangle_{H}
+\lambda \langle \nabla\bar c(t),\nabla \varphi \rangle
&+\langle\cA(g(\bar c(t)),\nabla\bar c(t)),\varphi \rangle \\
&=\langle f(\bar c(t)),\varphi \rangle,\quad \forall \varphi \in H_0^1(\O).
\end{aligned}
\end{equation}


Using the Gradient Discretisation Method, we present a general approximation scheme for the problem \eqref{eq-weak-model}, substituting the continuous operators with their discrete equivalent.

\begin{definition}\label{def-gd}
A gradient discretisation for the problem \eqref{eq-weak-model} is defined by $\disc=(X_{\disc,0}, \Pi_\disc, \nabla_\disc,J_\disc, (t^{(m)})_{0\leq m\leq N} )$, where
\begin{enumerate}
\item the set of discrete unknowns $X_{\disc,0}$ is a finite dimensional space on $\RR$, corresponding together to the interior unknowns and to the boundary unknowns,
\item the function reconstruction $\Pi_\disc : X_{\disc,0} \to L^2(\O)$ is a linear,
\item the gradient reconstruction $\nabla_\disc : X_{\disc,0} \to L^2(\O)^d$ is a linear and must be defined so that $|| \nabla_\disc \cdot ||_{L^2(\O)^d}$ defines a norm on $X_{\disc,0}$,
\item $J_\disc:L^\infty(\O) \to X_{\disc,0}$ is a linear and continuous interpolation operator for the initial conditions,
\item $0=t^{(0)}<t^{(1)}<...<t^{(N)}=T$ are time steps.
\end{enumerate}
\end{definition}

\begin{itemize}
\item The stability of the generic gradient discretisation is assessed by the constant $C_\disc$ defined by
\begin{equation}\label{eq-coerc}
C_\disc = \max_{u\in X_{\disc,0}-\{0\}}\dsp\frac{\|\Pi_\disc u \|_{L^2(\O)}}{\| \nabla_\disc u \|_{L^2(\O)^d}},
\end{equation}
which leads to the discrete Poincar\'e inequality
\begin{equation}\label{ponc-enq}
\|\Pi_\disc u\|_{L^2(\O)} \leq C_\disc \|\nabla_\disc u\|_{L^2(\O)^d}.
\end{equation}

\item The consistency of the generic gradient discretisation is assessed by the function $S_\disc:H_0^1(\O) \to X_{\disc,0}$ defined by
\begin{equation}\label{eq-consist}
\begin{aligned}
S_\disc(w)= \min_{u\in X_\disc}\left(
\| \Pi_\disc u - w \|_{L^{2}(\O)}
+ \| \nabla_\disc u - \nabla w \|_{L^{2}(\O)^{d}}
\right).
\end{aligned}
\end{equation}

\item The limit-conformity of the generic gradient discretisation is assessed by the function $S_\disc:H_{\rm div} \to [0,\infty)$ defined by
\begin{equation}\label{eq-conformity}
\begin{aligned}
W_\disc(\bxi) = \max_{u\in X_{\disc,0}-\{0\}}
\dsp\frac{\left|
\langle \nabla_\disc u,\bxi \rangle
+ \langle \Pi_\disc u,\div \bxi \rangle
\right|}
{\| \nabla_\disc u \|_{L^{2}(\O)^d}},
\end{aligned}
\end{equation}
where $H_{\rm div}:=\{\bxi \in L^2(\O)^d\;:\; {\rm div}\bxi \in L^2(\O),\; \bxi\cdot\bfn=0 \mbox{ on } \dr\O  \}$.
\end{itemize}

\begin{definition}[an Implicit Gradient Scheme]
Let $\delta t^{(m+\frac{1}{2})}=\max_{m=0,..,N} t^{(m)}$. We define the discrete operator $\dr_\disc$ to approximate the derivative in time by
\[
\dr_\disc u^{(m+1)}=\dsp\frac{u^{(m+1)}-u^{(m)}}{\delta t^{(m+\frac{1}{2})}}.
\]
If $\disc$ is a gradient discretisation, we define the approximate scheme for the problem \eqref{eq-weak-model} by: Find $c:=(c^{(m)})_{\in\NN} \in X_{\disc,0}^{N+1}$, such that 
\begin{itemize}
\item $J_\disc(c_0)=c^{(0)}$,
\item for any $m\in [0,N]$,
\begin{equation}\label{eq-gs}
\begin{aligned}
\langle \dr_\disc c^{(m+1)},\Pi_\disc \varphi \rangle
+\lambda \langle \nabla_\disc c^{(m+1)},\nabla_\disc \varphi \rangle
&+\langle \cA(g(\Pi_\disc c^{(m+1)}),\nabla_\disc c^{(m+1)}),\Pi_\disc \varphi \rangle\\
&=\langle  f(c^{(m+1)}),\Pi_\disc \varphi \rangle,
\quad \forall \varphi \in X_{\disc,0}.
\end{aligned}
\end{equation} 
\end{itemize}
\end{definition}

\begin{lemma}
Let Assumptions \ref{assump-rm} hold, and $\disc$ be a gradient discretisation. If $\delta t^{(m+\frac{1}{2})}<\frac{2\lambda}{C_\disc+\varepsilon}$, such that $\varepsilon>0$, then the approximate scheme \eqref{eq-gs} admits a unique solution.
\end{lemma}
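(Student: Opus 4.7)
The plan is to treat each time step independently by induction: fix $m$ and, assuming $c^{(m)}\in X_{\disc,0}$ is given, establish existence and uniqueness of $c^{(m+1)}\in X_{\disc,0}$ satisfying \eqref{eq-gs}. Since $X_{\disc,0}$ is finite-dimensional, this is a nonlinear algebraic system, so existence and uniqueness can be handled separately and by different tools.

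For existence, I would apply the standard corollary of Brouwer's fixed-point theorem. Equipping $X_{\disc,0}$ with the inner product $(u,v)\mapsto\langle\nabla_\disc u,\nabla_\disc v\rangle$ (which is genuinely an inner product by item (3) of Definition \ref{def-gd}), define $\cF:X_{\disc,0}\to X_{\disc,0}$ through the Riesz representation of the residual of \eqref{eq-gs}, so that $\cF(v)=0$ iff $v$ solves the scheme with $c^{(m+1)}=v$. Continuity of $\Pi_\disc$, $\nabla_\disc$, $g$, $\cA$ and $f$ makes $\cF$ continuous. Testing with $\varphi=v$, using coercivity $\lambda\|\nabla_\disc v\|^2$, the discrete Poincar\'e inequality \eqref{ponc-enq}, the Lipschitz bounds on $\cA$, $g$, $f$, and Young's inequality to absorb the cross terms, one obtains an estimate of the form $\langle\cF(v),v\rangle\geq \frac{\lambda}{2}\|\nabla_\disc v\|^2 - K$, where $K$ depends only on $c^{(m)}$ and the data through $f(0)$, $g(0)$ and $\cA(0,0)$. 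Hence $\langle\cF(v),v\rangle>0$ on a sphere of sufficiently large radius in $X_{\disc,0}$, and the Brouwer corollary yields a zero of $\cF$.

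For uniqueness, suppose $c_1,c_2\in X_{\disc,0}$ are two solutions at step $m+1$ sharing the same $c^{(m)}$, and set $w=c_1-c_2$. Taking $\varphi=w$ in the difference of the two instances of \eqref{eq-gs} yields
\begin{equation*}
\frac{\|\Pi_\disc w\|^2}{\delta t^{(m+\frac{1}{2})}}+\lambda\|\nabla_\disc w\|^2 = -\langle\cA(g(\Pi_\disc c_1),\nabla_\disc c_1)-\cA(g(\Pi_\disc c_2),\nabla_\disc c_2),\Pi_\disc w\rangle+\langle f(c_1)-f(c_2),\Pi_\disc w\rangle.
\end{equation*}
Using the Lipschitz constants $\ell_1,\ell_2,\ell_3$ together with \eqref{ponc-enq} bounds the right-hand side, after a judicious Young splitting, by $\alpha\|\nabla_\disc w\|^2+\beta\|\Pi_\disc w\|^2$, where $\alpha,\beta$ depend only on $\ell_1,\ell_2,\ell_3$ and $C_\disc$. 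The time-step threshold $\delta t^{(m+\frac{1}{2})}<\frac{2\lambda}{C_\disc+\varepsilon}$ is exactly what forces $\frac{1}{\delta t^{(m+\frac{1}{2})}}-\beta>0$ together with $\lambda-\alpha\geq 0$, so the left-hand side strictly dominates and $w=0$.

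The main obstacle is the nonlinear convection term $\cA(g(\Pi_\disc\cdot),\nabla_\disc\cdot)$, which couples function-value and gradient information: its Lipschitz estimate produces simultaneously a contribution in $\|\Pi_\disc w\|$ (through $g$) and one in $\|\nabla_\disc w\|$, and these must be rebalanced via Young's inequality so that strictly less than $\lambda$ coercivity is consumed on the gradient side. The reaction term, Lipschitz only in the function value, is controlled purely via the discrete Poincar\'e inequality \eqref{ponc-enq} and is less delicate. The same splitting powers both the a priori bound behind the Brouwer step and the uniqueness contraction, and the explicit form $\frac{2\lambda}{C_\disc+\varepsilon}$ in the statement is the quantitative outcome of that balance.
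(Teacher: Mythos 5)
Your route is genuinely different from the paper's. The paper freezes the nonlinearities at a fixed $u\in X_{\disc,0}$ (replacing $g(\Pi_\disc c^{(m+1)})$ and $f(\Pi_\disc c^{(m+1)})$ by $g(\Pi_\disc u)$ and $f(\Pi_\disc u)$), takes for granted that the resulting inner problem has a unique solution $c=\cB(u)$, and shows that $\cB$ is a contraction on $X_{\disc,0}$ equipped with $\|\nabla_\disc\cdot\|_{L^2(\O)^d}$; existence and uniqueness then follow simultaneously from the contraction-mapping principle (the paper invokes ``Brouwer'', but what it actually proves and uses is a Banach-type contraction). You instead work with the full nonlinear residual: a Brouwer corollary for existence, and an energy estimate on the difference of two solutions for uniqueness. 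Your split is more robust in one respect: the paper's inner problem still contains $\cA(g(\Pi_\disc u),\nabla_\disc c)$, which is not linear in $c$ since $\cA$ is only assumed Lipschitz in its second argument, so the claim that the inner problem is a ``linear square system'' with an evident unique solution is itself unjustified; your approach never needs that intermediate solvability.

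That said, there is a quantitative gap in your argument as stated. In the uniqueness step the Lipschitz bounds give
\[
\bigl|\langle \cA(g(\Pi_\disc c_1),\nabla_\disc c_1)-\cA(g(\Pi_\disc c_2),\nabla_\disc c_2),\Pi_\disc w\rangle\bigr|
\le \ell_1\ell_2\|\Pi_\disc w\|_{L^2(\O)}^2+\ell_1\|\nabla_\disc w\|_{L^2(\O)^d}\|\Pi_\disc w\|_{L^2(\O)},
\]
so after Young's inequality your constants $\alpha,\beta$ necessarily involve $\ell_1,\ell_2,\ell_3$ (for instance $\beta\ge \ell_1\ell_2+\ell_3+\ell_1^2/(2\varepsilon')$). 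The resulting smallness condition is therefore of the form $\delta t^{(m+\frac12)}<1/\beta(\ell_1,\ell_2,\ell_3,C_\disc,\lambda)$ and cannot literally reduce to the stated threshold $2\lambda/(C_\disc+\varepsilon)$, which contains no Lipschitz constants. The same absorption issue affects the coercivity claim $\langle\cF(v),v\rangle\ge\frac{\lambda}{2}\|\nabla_\disc v\|_{L^2(\O)^d}^2-K$ behind your existence step: the contribution $\ell_1\ell_2\|\Pi_\disc v\|_{L^2(\O)}^2$ coming from $\cA$ is quadratic in $v$ and can only be absorbed into $\lambda\|\nabla_\disc v\|_{L^2(\O)^d}^2$ if $\ell_1\ell_2 C_\disc^2<\lambda$, or into $\frac{1}{\delta t^{(m+\frac12)}}\|\Pi_\disc v\|_{L^2(\O)}^2$ under a time-step restriction again involving the $\ell_i$. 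This is a defect the paper's own derivation shares (its constants $L$ are silently dropped before the threshold is read off), but to make your proof deliver the lemma exactly as stated you must either carry these constants into the time-step restriction or add a hypothesis controlling them.
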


\begin{proof}
Assume that $c^{(m)}$ is known and unique in the scheme \eqref{eq-gs}. This means that we solve a square system of non-linear elliptic equations with variables $c^{(m+1)}$ at each iteration $m+1$. We use here the Brouwer’s fixed point theorem. If we fix $u\in X_{\disc,0}$, we can see that there exists a unique function $c\in X_{\disc,0}$ satisfying the following linear square system:
\begin{equation}\label{ex-u-1}
\begin{aligned}
\frac{1}{\delta t^{(m+\frac{1}{2})}}\langle \Pi_\disc(c-c^{(m)}),\Pi_\disc \varphi \rangle
&+\lambda \langle \nabla_\disc c,\nabla_\disc \varphi \rangle
+\langle \cA(g(\Pi_\disc u),\nabla_\disc c),\Pi_\disc \varphi \rangle\\
&\qquad=\langle f(\Pi_\disc u),\varphi \rangle, \quad \forall \varphi \in X_{\disc,0}.
\end{aligned}
\end{equation}
Let $\cB:X_{\disc,0}\to X_{\disc,0}$ be a mapping such that $\cB(u)=c$ in which $c$ solves the above problem. To establish the existence and uniqueness of the discrete solution $c$, it is enough to show that $\cB$ is a contractive mapping with respect to the normed space $X_{\disc,0}$. For $u,\tilde u \in X_{\disc,0}$ such that $c=\cB(u)$ and $\tilde c=\cB(\tilde u)$, we have 
\begin{equation}\label{ex-u-1-a}
\begin{aligned}
\frac{1}{\delta t^{(m+\frac{1}{2})}}\langle \Pi_\disc(c-c^{(m)}),\Pi_\disc \varphi \rangle
&+\lambda \langle \nabla_\disc c, \nabla_\disc \varphi \rangle
+\langle \cA(g(\Pi_\disc u),\nabla_\disc c),\Pi_\disc \varphi \rangle\\
&\qquad=\langle f(\Pi_\disc u),\Pi_\disc \varphi \rangle, \quad \forall \varphi \in X_{\disc,0},
\end{aligned}
\end{equation}
\begin{equation}\label{ex-u-1-b}
\begin{aligned}
\frac{1}{\delta t^{(m+\frac{1}{2})}}\langle \Pi_\disc(\tilde c-c^{(m)}),\Pi_\disc \varphi \rangle
&+\lambda \langle \nabla_\disc \tilde c, \nabla_\disc \varphi \rangle
+\langle \cA(g(\Pi_\disc \tilde u),\nabla_\disc \tilde c),\Pi_\disc \varphi \rangle\\
&\qquad=\langle f(\Pi_\disc \tilde u),\Pi_\disc \varphi \rangle, \quad \forall \varphi \in X_{\disc,0},
\end{aligned}
\end{equation}
Subtracting \eqref{ex-u-1-b} from \eqref{ex-u-1-a} yields, for all $\varphi,\; \psi \in X_{\disc,0}$, 
\begin{equation}\label{ex-u-2}
\begin{aligned}
\frac{1}{\delta t^{(m+\frac{1}{2})}}\langle \Pi_\disc(c&-\tilde c),\Pi_\disc \varphi \rangle
+\lambda \langle \nabla_\disc (c-\tilde c), \nabla_\disc \varphi \rangle\\
&+\langle \cA(g(\Pi_\disc u),\nabla_\disc c)-\cA(g(\Pi_\disc \tilde u),\nabla_\disc \tilde c),\Pi_\disc \varphi \rangle\\
&\quad=\langle f(\Pi_\disc u)
-f(\Pi_\disc \tilde u),\Pi_\disc \rangle.
\end{aligned}
\end{equation}
Taking $\varphi=\frac{1}{\delta t^{(m+\frac{1}{2})}}(c-\tilde c)$ in the above equation, one has
\begin{equation}\label{ex-u-3}
\begin{aligned}
&\frac{1}{(\delta t^{(m+\frac{1}{2})})^2}\|\Pi_\disc(c-\tilde c)\|_{L^2(\O)}^2
+\frac{\lambda}{\delta t^{(m+\frac{1}{2})}}\|\nabla_\disc(c-\tilde c)\|_{L^2(\O)^d}^2\\
&\leq \frac{1}{\delta t^{(m+\frac{1}{2})}}\langle f(\Pi_\disc u)
-f(\Pi_\disc \tilde u),\Pi_\disc(c-\tilde c) \rangle\\
&\quad-\frac{1}{\delta t^{(m+\frac{1}{2})}}\langle \cA(g(\Pi_\disc u),\nabla_\disc c)-\cA(g(\Pi_\disc \tilde u),\nabla_\disc \tilde c),\Pi_\disc (c-\tilde c) \rangle.
\end{aligned}
\end{equation}
To estimate the first term on the right-hand side of the above inequality, we first employ the Cauchy–Schwarz inequality, followed by Young's inequality. We obtain, thanks to the Lipschitz continuity conditions on $F$
\begin{equation}\label{ex-u-4}
\begin{aligned}
&\frac{1}{\delta t^{(m+\frac{1}{2})}}\langle f(\Pi_\disc u)
-f(\Pi_\disc \tilde u),\Pi_\disc(c-\tilde c) \rangle \\
&\leq L\|\Pi_\disc u - \Pi_\disc \tilde u\|_{L^2(\O)} \|\frac{1}{\delta t^{(m+\frac{1}{2})}}\Pi_\disc(c-\tilde c)\|_{L^2(\O)}\\
&\leq \frac{L^2}{2}\|\Pi_\disc u - \Pi_\disc \tilde u\|_{L^2(\O)}^2 
+\frac{1}{2}\|\frac{1}{\delta t^{(m+\frac{1}{2})}}\Pi_\disc(c-\tilde c)\|_{L^2(\O)}^2,
\end{aligned}
\end{equation}
where $L:=\max\{\ell_1,\ell_2,\ell_3\}$. Now, to establish a bound on the second term on the right-hand side of the inequality \eqref{ex-u-3}, we apply the Cauchy–Schwarz inequality. We obtain, thanks to the Lipschitz continuity conditions on the operator $\cA$ and the function $g$
\begin{equation*}
\begin{aligned}
&\frac{1}{\delta t^{(m+\frac{1}{2})}}\langle \cA(g(\Pi_\disc u),\nabla_\disc c)-\cA(g(\Pi_\disc \tilde u),\nabla_\disc \tilde c),\Pi_\disc (c-\tilde c) \rangle \\
&\leq L\|(g(\Pi_\disc u),\nabla_\disc c)-(g(\Pi_\disc \tilde u),\nabla_\disc \tilde c)\|_{L^2(\O)\times L^2(\O)^d}
\|\frac{1}{\delta t^{(m+\frac{1}{2})}}\Pi_\disc (c-\tilde c)\|_{L^2(\O)}\\
&\leq \Big(L\|g(\Pi_\disc u)-g(\Pi_\disc \tilde u)\|_{L^2(\O)} + \ell_1\|\nabla_\disc (c-\tilde c)\|_{L^2(\O)^d} \Big) \|\frac{1}{\delta t^{(m+\frac{1}{2})}}\Pi_\disc (c-\tilde c)\|_{L^2(\O)^d} \\
& \leq \Big(L^2 \|\Pi_\disc (u-\tilde u)\|_{L^2(\O)} + L\|\nabla_\disc (c-\tilde c)\|_{L^2(\O)^d} \Big) \|\frac{1}{\delta t^{(m+\frac{1}{2})}}\Pi_\disc (c-\tilde c)\|_{L^2(\O)}\\
&\leq L^2\|\Pi_\disc (u-\tilde u)\|_{L^2(\O)} \|\frac{1}{\delta t^{(m+\frac{1}{2})}}\Pi_\disc (c-\tilde c)\|_{L^2(\O)^d}\\
&\quad+ L\|\nabla_\disc (c-\tilde c)\|_{L^2(\O)^d}
\|\frac{1}{\delta t^{(m+\frac{1}{2})}}\Pi_\disc (c-\tilde c)\|_{L^2(\O)},
\end{aligned}
\end{equation*}
which leads to, with the application of Young's inequality (with a small parameter $\varepsilon >0$) 
\begin{equation}\label{ex-u-4-new}
\begin{aligned}
&\frac{1}{\delta t^{(m+\frac{1}{2})}}\langle \cA(g(\Pi_\disc u),\nabla_\disc c)-\cA(g(\Pi_\disc \tilde u),\nabla_\disc \tilde c),\Pi_\disc (c-\tilde c) \rangle \\
&\leq \frac{1}{2} \|\Pi_\disc (u-\tilde u)\|_{L^2(\O)}^2 
+\frac{L^4\varepsilon+L^2}{2\varepsilon}\|\frac{1}{\delta t^{(m+\frac{1}{2})}}\Pi_\disc (c-\tilde c)\|_{L^2(\O)^d}^2\\
&\quad+\frac{\varepsilon}{2}\|\nabla_\disc (c-\tilde c)\|_{L^2(\O)^d}^2.
\end{aligned}
\end{equation}
Plugging \eqref{ex-u-4} and \eqref{ex-u-4-new} into \eqref{ex-u-3}, we obtain
\begin{equation*}
\begin{aligned}
(\frac{\lambda}{\delta t^{(m+\frac{1}{2})}}-\frac{\varepsilon}{2})\|\nabla_\disc(c-\tilde c)\|_{L^2(\O)^d}^2
\leq \|\Pi_\disc u - \Pi_\disc \tilde u\|_{L^2(\O)}^2.
\end{aligned}
\end{equation*}
Owing to the discrete Poincar\'e inequality \eqref{ponc-enq}, we conclude
\begin{equation*}
\begin{aligned}
\frac{2\lambda-\varepsilon \delta t^{(m+\frac{1}{2})}}{2\delta t^{(m+\frac{1}{2})}}\|\nabla_\disc(c-\tilde c)\|_{L^2(\O)^d}^2
\leq C_\disc\|\nabla_\disc (u - \tilde u)\|_{L^2(\O)^d}^2.
\end{aligned}
\end{equation*}
Take the square root of both sides to get
\begin{equation*}
\begin{aligned}
\|\nabla_\disc(c&-\tilde c)\|_{L^2(\O)^d}
\leq \sqrt{\frac{C_\disc \delta t^{(m+\frac{1}{2})}}{2\lambda-\varepsilon \delta t^{(m+\frac{1}{2})}} }\|\nabla_\disc (u -\tilde u)\|_{L^2(\O)^d}.
\end{aligned}
\end{equation*}
Since $c=\cB(u)$ and $\tilde c=\cB(\tilde u)$, we arrive at
\begin{equation*}\label{ex-10}
\begin{aligned}
\|\cB(u)-\cB(\tilde u)\|_{X_{\disc,0}}
\leq C_4 \|u-\tilde u\|_{X_{\disc,0}},
\end{aligned}
\end{equation*}
which proves that the mapping $\cB$ is contractive under the condition $\delta t^{(m+\frac{1}{2})}<\frac{2\lambda}{C_\disc+\varepsilon}$, such that $\varepsilon>0$, and it has a unique fixed point $\cB(u)=c$.
\end{proof}


\section{Error Estimates}\label{sec-theorem}
\begin{theorem}\label{thm-err-rm} 
Under Hypothesis \ref{assump-rm} and the condition that $\delta t^{(m+\frac{1}{2})}<\frac{2\lambda}{C_\disc+\varepsilon}$, such that $\varepsilon>0$, let $\disc$ be a gradient discretisation, and $\bar c$ and $c$ be the solutions to the continuous problem \eqref{eq-weak-model} and to the approximate scheme \eqref{eq-gs}, respectively. If $\bar c \in W^{1,\infty}(0,T;W^{2,\infty}(\O))$, then the following error estimates hold:
\begin{subequations}\label{eq-error}
\begin{equation}\label{eq-error-1}
\begin{aligned}
\Big\| \Pi_\disc c(\cdot,t)&-\bar c(\cdot,t) \Big\|_{L^\infty(0,T;L^2(\O))}\\
&\leq C_1\Big[\delta t 
+S_\disc(\bar c(0))
+\|c_{\rm ini}-\Pi_\disc J_\disc c_{\rm ini} \|_{L^2(\O)}\\
&\quad+\dsp\sum_{m=0}^{N-1}\delta t^{(m+\frac{1}{2})}\mathbb M_\disc^{(m+1)}\Big]
+\sqrt{2}S_\disc(\bar c(t^{(k)})),\quad \forall k\in\{1,...,N\},
\end{aligned}
\end{equation}
\begin{equation}\label{eq-error-2}
\begin{aligned}
\Big\| \nabla_\disc c&-\nabla\bar c \Big\|_{L^2(\O\times(0,T))^d}\\
&\leq C_2\Big[ \delta t 
+S_\disc(\bar c(0))
+\|c_{\rm ini}-\Pi_\disc J_\disc c_{\rm ini} \|_{L^2(\O)}\\
&\quad+\dsp\sum_{m=0}^{N-1}\delta t^{(m+\frac{1}{2})}\mathbb M_\disc^{(m+1)}\Big]
+\sqrt{2}\dsp\sum_{m=0}^{N-1}\delta t^{(m+\frac{1}{2})}S_\disc(\bar c(t^{(m+1)})),
\end{aligned}
\end{equation}
\end{subequations}
where $C_i>0$ for all $i\in \NN$ does not depend on discrete data, and
\begin{equation}\label{eq-MD}
\mathbb M_\disc^{(m+1)}:=\dsp\sum_{m=0}^{k-1} \delta t^{(m+\frac{1}{2})}\Big[ \delta t + S_\disc(\bar c(t^{(m+1)})) + S_\disc(\dr_t\bar c^{(m+1)}) + W_\disc(\nabla\bar c^{(m+1)}) \Big]^2.
\end{equation}
\end{theorem}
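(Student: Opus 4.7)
The plan is to follow the classical GDM error-estimate strategy: introduce a consistency interpolant of $\bar c$ in $X_{\disc,0}$, derive an error equation for the difference, run an energy estimate testing by the error itself, and close with a discrete Gronwall inequality. For each $t\in[0,T]$, let $\cI_\disc\bar c(t)\in X_{\disc,0}$ realise (up to arbitrarily small slack) the infimum defining $S_\disc(\bar c(t))$ in \eqref{eq-consist}, and set $\eta^{(m)}:=c^{(m)}-\cI_\disc\bar c(t^{(m)})$. Triangle inequality then gives
\[
\|\Pi_\disc c^{(m)}-\bar c(t^{(m)})\|_{L^2(\O)}\le \|\Pi_\disc\eta^{(m)}\|_{L^2(\O)}+\sqrt{2}\,S_\disc(\bar c(t^{(m)})),
\]
with an analogous splitting for the gradient, so that everything reduces to controlling the discrete norms of $\eta^{(m)}$.

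Next I would derive the equation satisfied by $\eta^{(m+1)}$. Evaluating the strong form \eqref{eq-conv-1} at $t=t^{(m+1)}$, testing with $\Pi_\disc\varphi$ for $\varphi\in X_{\disc,0}$, and applying the limit-conformity bound \eqref{eq-conformity} to $\bxi=\nabla\bar c^{(m+1)}\in H_{\rm div}$ (legitimate thanks to $\bar c\in W^{1,\infty}(0,T;W^{2,\infty}(\O))$) yields, for every $\varphi\in X_{\disc,0}$,
\[
\langle \dr_t\bar c^{(m+1)},\Pi_\disc\varphi\rangle+\lambda\langle\nabla\bar c^{(m+1)},\nabla_\disc\varphi\rangle+\langle\cA(g(\bar c^{(m+1)}),\nabla\bar c^{(m+1)}),\Pi_\disc\varphi\rangle=\langle f(\bar c^{(m+1)}),\Pi_\disc\varphi\rangle+R^{(m+1)}(\varphi),
\]
where $|R^{(m+1)}(\varphi)|\le W_\disc(\nabla\bar c^{(m+1)})\|\nabla_\disc\varphi\|_{L^2(\O)^d}$. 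Subtracting this identity from the scheme \eqref{eq-gs}, replacing $\bar c^{(m+1)}$ and $\nabla\bar c^{(m+1)}$ by $\Pi_\disc\cI_\disc\bar c^{(m+1)}$ and $\nabla_\disc\cI_\disc\bar c^{(m+1)}$ modulo $S_\disc(\bar c^{(m+1)})$-remainders, and replacing $\dr_t\bar c^{(m+1)}$ by $\dr_\disc\cI_\disc\bar c^{(m+1)}$ via a standard Taylor expansion producing a $\delta t+S_\disc(\dr_t\bar c^{(m+1)})$ remainder, leads to an error equation of the form
\[
\langle \dr_\disc\eta^{(m+1)},\Pi_\disc\varphi\rangle+\lambda\langle\nabla_\disc\eta^{(m+1)},\nabla_\disc\varphi\rangle=\mathcal T_1(\varphi)+\mathcal T_2(\varphi),
\]
in which $\mathcal T_1$ aggregates the consistency, limit-conformity and time-discretisation remainders controlled by $[\delta t+S_\disc(\bar c^{(m+1)})+S_\disc(\dr_t\bar c^{(m+1)})+W_\disc(\nabla\bar c^{(m+1)})]\|\nabla_\disc\varphi\|_{L^2(\O)^d}$, while $\mathcal T_2$ collects the Lipschitz differences coming from $f$ and $\cA(g(\cdot),\cdot)$.

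The energy step takes $\varphi=\eta^{(m+1)}$, multiplies by $\delta t^{(m+\frac12)}$, and uses the telescoping inequality
\[
\delta t^{(m+\frac12)}\langle\dr_\disc\eta^{(m+1)},\Pi_\disc\eta^{(m+1)}\rangle\ge \frac{1}{2}\bigl(\|\Pi_\disc\eta^{(m+1)}\|_{L^2(\O)}^2-\|\Pi_\disc\eta^{(m)}\|_{L^2(\O)}^2\bigr).
\]
Cauchy--Schwarz on $\mathcal T_1$ followed by Young's inequality produces the squared bracket in \eqref{eq-MD}, whereas the treatment of $\mathcal T_2$ exactly mimics the argument of the preceding lemma: the Lipschitz assumption and Young's inequality (with the same small parameter $\varepsilon$) generate a term in $\|\nabla_\disc\eta^{(m+1)}\|_{L^2(\O)^d}^2$ that is absorbed on the left via the Poincar\'e bound \eqref{ponc-enq} under the hypothesis $\delta t^{(m+\frac12)}<2\lambda/(C_\disc+\varepsilon)$. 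Summing for $m=0,\dots,k-1$ telescopes the discrete time derivative and leaves $\|\Pi_\disc\eta^{(0)}\|_{L^2(\O)}^2$, which is controlled by $\|c_{\rm ini}-\Pi_\disc J_\disc c_{\rm ini}\|_{L^2(\O)}^2+S_\disc(\bar c(0))^2$. A discrete Gronwall lemma then yields \eqref{eq-error-1}. Keeping the full sum of the dissipation terms instead of taking the running maximum, and re-inserting the interpolation error $\|\nabla_\disc\cI_\disc\bar c-\nabla\bar c\|_{L^2(\O\times(0,T))^d}\le \sqrt{2}\sum_m\delta t^{(m+\frac12)}S_\disc(\bar c^{(m+1)})$, delivers \eqref{eq-error-2}.

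The main obstacle will be bounding the joint Lipschitz contribution
\[
\langle\cA(g(\Pi_\disc c^{(m+1)}),\nabla_\disc c^{(m+1)})-\cA(g(\bar c^{(m+1)}),\nabla\bar c^{(m+1)}),\Pi_\disc\eta^{(m+1)}\rangle,
\]
which expands, via Assumptions \ref{assump-rm}, into cross-terms simultaneously involving $\|\Pi_\disc\eta^{(m+1)}\|_{L^2(\O)}$, $\|\nabla_\disc\eta^{(m+1)}\|_{L^2(\O)^d}$ and $S_\disc(\bar c^{(m+1)})$. The $\|\nabla_\disc\eta\|^2$ piece is precisely the one that must be absorbed by the $\lambda$-coercivity on the left, which is what forces the time-step restriction $\delta t^{(m+\frac12)}<2\lambda/(C_\disc+\varepsilon)$, in perfect analogy with the contraction argument used in the existence-uniqueness lemma. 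Once this absorption is done, everything else is routine bookkeeping: collecting consistency, limit-conformity and time-discretisation remainders into the squared bracket of $\mathbb M_\disc^{(m+1)}$, and iterating the resulting inequality via discrete Gronwall.
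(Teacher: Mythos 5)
Your proposal is correct and follows essentially the same route as the paper: the interpolant $\cI_\disc$ realising $S_\disc$ plays the role of the paper's operator $P_\disc$, the error $\eta^{(m)}$ is (up to sign) the paper's $\cE^{(m)}$, and the sequence of steps — limit-conformity applied to $\bxi=\nabla\bar c^{(m+1)}$, subtraction of the scheme, testing with the error, telescoping via $a_1(a_1-a_2)\ge\tfrac12 a_1^2-\tfrac12 a_2^2$, Lipschitz/Young/Poincar\'e absorption of the $\|\nabla_\disc\eta^{(m+1)}\|^2$ contribution, and discrete Gronwall — matches the paper's argument step for step. The only cosmetic difference is that the paper performs the absorption by tuning the Young parameters $\varepsilon_2,\varepsilon_3$ rather than invoking the time-step restriction directly, but this does not change the substance.
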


\begin{proof}
Let us begin by defining the operator $P_\disc H_0^1(\O)\to X_{\disc,0}$ by
\begin{equation}\label{PD}
P_\disc(w):=\arg\min_{u\in X_{\disc,0}} \|\Pi_\disc u - w\|_{L^2(\O)}
+\|\nabla_\disc u - \nabla w\|_{L^2(\O)}.
\end{equation}
It is proved in \cite[Lemma 1]{B1} that it is linear and satisfies the following property.
\begin{equation}\label{PD-r}
S_\disc(w)= \Big( \|\Pi_\disc P_\disc w - w\|_{L^2(\O)}
+\|\nabla_\disc P_\disc w - \nabla w\|_{L^2(\O)} \Big)^\frac{1}{2}, \quad \forall w\in H_0^1(\O).
\end{equation}
Additionally, based on the definition of $S_\disc$ in \eqref{eq-consist}, it follows that, for any $w\in H_0^1(\O)$,
\begin{equation}\label{new-eq-proof-100}
\begin{aligned}
\Big( \|\Pi_\disc P_\disc w &- w\|_{L^2(\O)}^2
+\|\nabla_\disc P_\disc w - \nabla w\|_{L^2(\O)^d}^2  \Big)^{1/2}
\\
&\leq \Big( \|\Pi_\disc u - w\|_{L^2(\O)}^2
+\|\nabla_\disc u - \nabla w\|_{L^2(\O)^d}^2  \Big)^{1/2}, \quad \forall u \in X_{\disc,0}\\
&\leq \sqrt{2}S_\disc(w),\quad \forall u \in X_{\disc,0}.
\end{aligned}
\end{equation}
Given that $\nabla\bar c$ is a Lipschitz-continuous with respect to time, using the \eqref{PD} with $w:=\bar c(t^{(m+1)})$ yields
\begin{equation}\label{eq-proof-1-u}
\begin{aligned}
\Big\| \nabla\bar c^{(m+1)}&-\nabla_\disc  P_\disc \bar c(t^{(m+1)}) \Big\|_{L^2(\O)^d}\\
&\leq \Big\| \nabla\bar c^{(m+1)}-\nabla\bar c(t^{(m+1)}) \Big\|_{L^2(\O)^d} + S_\disc(\bar c(t^{(m+1)}))\\
&\leq C_3\delta t+S_\disc(\bar c(t^{(m+1)})).
\end{aligned}
\end{equation}
We recognise that $\dr_t \bar c^{(m+1)}$ belongs to ${H^2(\O)}$. By utilising the linear interpolant $P_\disc$ with $w:=\dr_t\bar c^{(m+1)}=\frac{\bar c(t^{(m+1)})-\bar c(t^{(m)})}{\delta t^{(m+\frac{1}{2})}}$, we obtain
\begin{equation}\label{eq-proof-2-u}
\begin{aligned}
\Big\| \frac{\Pi_\disc  P_\disc\bar c(t^{(m+1)})-\Pi_\disc  P_\disc \bar c(t^{(m)})}
{\delta t^{(m+\frac{1}{2})}}
-\partial_t \bar c^{(m+1)} \Big\|_{L^2(\O)}
&\leq S_\disc(\partial_t\bar c^{(m+1)}).
\end{aligned}
\end{equation}
Now, we can employ the definition \eqref{eq-conformity} of $W_\disc$ to $\bxi=:\nabla\bar c^{(m+1)} \in H_{\div}(\O)$ to deduce, for all $u\in X_{\disc,0}$,
\begin{equation}\label{eq-limi-conf-u}
\begin{aligned}
\langle \Pi_\disc u,\div(\nabla\bar c^{(m+1)} \rangle
&+\langle \nabla\bar c^{(m+1)},\nabla_\disc u \rangle \\
&\leq W_\disc(\nabla\bar c^{(m+1)}) \| \nabla_\disc u \|_{L^2(\O)^d}.
\end{aligned}
\end{equation}
Since $\dr_t\bar c^{(m+1)} + \cA(g(\bar c^{(m+1)}),\nabla\bar c^{(m+1)})- f(\bar c^{(m+1)})=\lambda\div(\nabla\bar c^{(m+1)})$ holds a.e. in space and time, the above inequality gives
\begin{equation}\label{eq-proof-3-u}
\begin{aligned}
\langle \Pi_\disc u,\dr_t\bar c^{(m+1)} &+\cA(g(\bar c^{(m+1)}),\nabla\bar c^{(m+1)})- f(\bar c^{(m+1)}) \rangle
+\lambda \langle \nabla\bar c^{(m+1)}, \nabla_\disc u \rangle \\
&\leq W_\disc(\nabla\bar c^{(m+1)}) \| \nabla_\disc u \|_{L^2(\O)^d},\quad \forall u\in X_{\disc,0}.
\end{aligned}
\end{equation}
By adding the terms $\pm f(\Pi_\disc c^{(m+1)})$ to the above inequality, we have
\begin{equation}\label{eq-proof-4-u}
\begin{aligned}
\langle\Pi_\disc u, \dr_t\bar c^{(m+1)}&- f(\Pi_\disc c^{(m+1)})\rangle
+\langle \Pi_\disc u, f(\Pi_\disc c^{(m+1)}) - f(\bar c^{(m+1)}) \rangle\\
&\quad+\lambda \langle \nabla\bar c^{(m+1)},\nabla_\disc u \rangle \\
&\leq W_\disc(\nabla\bar c^{(m+1)}) \| \nabla_\disc u \|_{L^2(\O)^d},\quad \forall u\in X_{\disc,0}.
\end{aligned}
\end{equation}
We note that $\|\nabla\bar c^{(m+1)}\|_{W^{1,\infty}(\O)^d}$ is bounded. Using the fact that $c$ is the solution to the approximate scheme \eqref{eq-gs}, we achieve 
\begin{equation}\label{eq-proof-5-u}
\begin{aligned}
\langle \dr_t\bar c^{(m+1)}&- \delta_\disc^{(m+\frac{1}{2})}c, \Pi_\disc u \rangle
+\lambda \langle \nabla\bar c^{(m+1)}-\nabla_\disc c^{(m+1)},\nabla_\disc u \rangle \\
&\leq \langle \Pi_\disc u, f(\bar c^{(m+1)})-f(\Pi_\disc c^{(m+1)}\rangle\\
&\quad+\langle \cA(g(\Pi_\disc c^{(m+1)}),\nabla_\disc c^{(m+1)})-\cA(g(\bar c^{(m+1)}),\nabla\bar c^{(m+1)}), \Pi_\disc u\rangle\\
&\quad+W_\disc(\nabla\bar c^{(m+1)}) \| \nabla_\disc u \|_{L^2(\O)^d},\quad \forall u\in X_{\disc,0}.
\end{aligned}
\end{equation}
For $n=1,...,N$, consider the notions $\cE^{(n)}:=P_\disc\bar c(t^{(n)})-c^{(n)}$. It follows that
\[
\begin{aligned}
\delta_\disc^{(m+\frac{1}{2})}\cE:&=\frac{ \Pi_\disc \cE^{(n+1)}-\Pi_\disc \cE^{(m)} }{ \delta t^{(m+\frac{1}{2})} }\\
&=\Big(  \dsp\frac{ \Pi_\disc  P_\disc\bar c(t^{(m+1)})-\Pi_\disc  P_\disc(\bar c(t^{(m)})) }{ \delta t^{(m+\frac{1}{2})} }-\dr_t\bar c^{(m+1)} \Big)+\Big( \dr_t\bar c^{(m+1)}-\delta_\disc^{(m+\frac{1}{2})} c \Big),
\end{aligned}
\]
and
\[
\nabla_\disc \cE^{(m+1)}= \left( \nabla_\disc  (P_\disc\bar c(t^{(m+1)}))-\nabla\bar c^{(m+1)}  \right) + \left(\nabla\bar c^{(m+1)}-\nabla_\disc c^{(m+1)}  \right).
\]
Together with \eqref{eq-proof-5-u}, \eqref{eq-proof-2-u} and \eqref{eq-proof-1-u}, and employing the Cauchy-Schwarz inequality it yields, for all $u\in X_{\disc,0}$,
\begin{equation}\label{eq-proof-6-u}
\begin{aligned}
&\langle \delta_\disc^{(m+\frac{1}{2})}\cE,\Pi_\disc u \rangle
+\langle \nabla_\disc \cE^{(m+1)}, \nabla_\disc u \rangle\\
&\quad\leq \langle f(\bar c^{(m+1)})-f(\Pi_\disc c^{(m+1)}), \Pi_\disc u \rangle\\
&\quad\quad+\langle \cA(g(\Pi_\disc c^{(m+1)}),\nabla_\disc c^{(m+1)})-\cA(g(\bar c^{(m+1)}),\nabla\bar c^{(m+1)}), \Pi_\disc u\rangle\\
&\quad\quad+\Big[C_3\delta t+S_\disc(\bar c(t^{(m+1)})) + S_\disc(\partial_t\bar c^{(m+1)}) + W_\disc(\nabla\bar c^{(m+1)}) \Big]
\Big\|\nabla_\disc u \Big\|_{L^2(\O)^d}.
\end{aligned}
\end{equation}
Let $u:=\delta t^{(m+\frac{1}{2})}\cE^{(m+1)}$ in \eqref{eq-proof-6-u}. Summing over $m = 0,...,k-1$ for some $k\in \{1,...,N\}$ gives
\begin{equation}\label{eq-proof-7-u}
\begin{aligned}
&\dsp\sum_{m=0}^{k-1} \langle \Pi_\disc \cE^{(m+1)}-\Pi_\disc \cE^{(m)},\Pi_\disc E^{(m+1)} \rangle
+\lambda\dsp\sum_{m=0}^{k-1} \delta t^{(m+\frac{1}{2})} \left\| \nabla_\disc \cE^{(m+1)} \right\|_{L^2(\O)^d}^2\\
&\leq\dsp\sum_{m=0}^{k-1} \delta t^{(m+\frac{1}{2})} \langle f(\bar c^{(m+1)})-f(\Pi_\disc c^{(m+1)}),\Pi_\disc \cE^{(m+1)}\rangle\\
&\quad+\dsp\sum_{m=0}^{k-1} \delta t^{(m+\frac{1}{2})}\langle \cA(g(\Pi_\disc c^{(m+1)}),\nabla_\disc c^{(m+1)})-\cA(g(\bar c^{(m+1)}),\nabla\bar c^{(m+1)}), \Pi_\disc \cE^{(m+1)}\rangle\\
&\quad+\dsp\sum_{m=0}^{k-1} \delta t^{(m+\frac{1}{2})}\left[C_3\delta t+S_\disc(\bar c(t^{(m+1)})) + S_\disc(\partial_t\bar c^{(m+1)}) + W_\disc(\nabla\bar c^{(m+1)}) \right]\left\| \nabla_\disc \cE^{(m+1)} \right\|_{L^2(\O)^d}.
\end{aligned}
\end{equation}
By applying the relation $a_1(a_1-a_2)\geq \frac{1}{2}a_1^2-\frac{1}{2}a_2^2$ to the first term on the left-hand side, we derive 
\[
\begin{aligned}
&\frac{1}{2}\langle \Pi_\disc \cE^{(k)},\Pi_\disc \cE^{(k)} \rangle
+\lambda\dsp\sum_{m=0}^{k-1} \delta t^{(m+\frac{1}{2})} \left\| \nabla_\disc \cE^{(m+1)} \right\|_{L^2(\O)^d}^2\\
&\leq \frac{1}{2} \langle \Pi_\disc \cE^{(0)},\Pi_\disc \cE^{(0)} \rangle
+\dsp\sum_{m=0}^{k-1} \delta t^{(m+\frac{1}{2})} \langle f(\bar c^{(m+1)})-f(\Pi_\disc c^{(m+1)}), \Pi_\disc \cE^{(m+1)}\rangle\\
&\quad+\dsp\sum_{m=0}^{k-1} \delta t^{(m+\frac{1}{2})}\langle \cA(g(\Pi_\disc c^{(m+1)}),\nabla_\disc c^{(m+1)})-\cA(g(\bar c^{(m+1)}),\nabla\bar c^{(m+1)}), \Pi_\disc \cE^{(m+1)}\rangle\\
&\quad+\dsp\sum_{m=0}^{k-1} \delta t^{(m+\frac{1}{2})}\left[C_3\delta t+S_\disc(\bar c(t^{(m+1)})) + S_\disc(\partial_t\bar c^{(m+1)}) + W_\disc(\nabla\bar c^{(m+1)}) \right]\left\| \nabla_\disc \cE^{(m+1)} \right\|_{L^2(\O)^d}.
\end{aligned}
\]
Utilising Young's inequality with a small parameter $\varepsilon_1>0$, we can estimate the last term on the right-hand side. It yields
\begin{equation}\label{eq-proof-8-u}
\begin{aligned}
&\frac{1}{2}\langle \Pi_\disc \cE^{(k)},\Pi_\disc \cE^{(k)} \rangle
+\lambda\dsp\sum_{m=0}^{k-1} \delta t^{(m+\frac{1}{2})} \left\| \nabla_\disc \cE^{(m+1)} \right\|_{L^2(\O)^d}^2 \\
&\leq \frac{1}{2} \langle \Pi_\disc \cE^{(0)},\Pi_\disc \cE^{(0)} \rangle
+\dsp\sum_{m=0}^{k-1} \delta t^{(m+\frac{1}{2})} \langle f(\bar c^{(m+1)})-f(\Pi_\disc c^{(m+1)}), \Pi_\disc \cE^{(m+1)}\rangle\\
&\quad+\dsp\sum_{m=0}^{k-1} \delta t^{(m+\frac{1}{2})}\langle \cA(g(\Pi_\disc c^{(m+1)}),\nabla_\disc c^{(m+1)})-\cA(g(\bar c^{(m+1)}),\nabla\bar c^{(m+1)}), \Pi_\disc \cE^{(m+1)}\rangle\\
&\quad+\frac{1}{2\varepsilon_1}\dsp\sum_{m=0}^{k-1} \delta t^{(m+\frac{1}{2})}\left[ C_3\delta t+S_\disc(\bar c(t^{(m+1)})) + S_\disc(\partial_t\bar c^{(m+1)}) + W_\disc(\nabla\bar c^{(m+1)}) \right]^2\\
&\quad+\frac{\varepsilon_1}{2}\dsp\sum_{m=0}^{k-1} \delta t^{(m+\frac{1}{2})} \left\| \nabla_\disc \cE^{(m+1)} \right\|_{L^2(\O)^d}^2.
\end{aligned}
\end{equation}
Let us now focus on the right-hand side of the above inequality. Firstly, we note that
\begin{equation}\label{eq-proof-9-u}
\begin{aligned}
\| \Pi_\disc \cE^{(0)} \|_{L^2(\O)} &\leq \| \Pi_\disc P_\disc \bar c(0)-\bar c(0) \|_{L^2(\O)}
+\| \bar c(0)-\Pi_\disc J_\disc\bar c(0) \|_{L^2(\O)}\\
&\leq S_\disc(\bar c(0)) + \| \bar c(0)-\Pi_\disc J_\disc\bar c(0) \|_{L^2(\O)}.
\end{aligned}
\end{equation}
In what follows, we let $\cE_\disc^0:=\| \bar c(0)-\Pi_\disc J_\disc\bar c(0) \|_{L^2(\O)}$. Utilising the Cauchy–Schwarz inequality and the assumption of Lipschitz continuity on $F$, one can express
\begin{equation}\label{eq-proof-F}
\begin{aligned}
&\dsp\sum_{m=0}^{k-1} \delta t^{(m+\frac{1}{2})} \langle f(\bar c^{(m+1)})-f(\Pi_\disc c^{(m+1)}), \Pi_\disc \cE^{(m+1)}\rangle\\
&\leq \ell_3 \dsp\sum_{m=0}^{k-1} \delta t^{(m+\frac{1}{2})} \Big\|\bar c^{(m+1)}-\Pi_\disc c^{(m+1)}\Big\|_{L^2(\O)}
\|\Pi_\disc \cE^{(n+1)}\|_{L^2(\O)}\\
&\leq \ell_3 \dsp\sum_{m=0}^{k-1} \delta t^{(m+\frac{1}{2})}
\Big[ \Big\|\Pi_\disc \cE^{(m+1)}\Big\|_{L^2(\O)} \Big\|\bar c^{(m+1)}-\Pi_\disc P_\disc \bar c(t^{(m+1)})\Big\|_{L^2(\O)}\\
&\quad+\Big\|\Pi_\disc \cE^{(m+1)}\Big\|_{L^2(\O)}^2 \Big].
\end{aligned}
\end{equation}
By employing the triangle inequality, along with the definition \eqref{PD} of $P_\disc$ to $w:=\bar c(t^{(n+1)})$, we deduce
\begin{equation}\label{eq-proof-Pi-u}
\begin{aligned}
\Big\| \bar c^{(m+1)}&-\Pi_\disc  P_\disc \bar c(t^{(m+1)}) \Big\|_{L^2(\O)}
\\
&\leq \Big\| \bar c^{(m+1)}-\bar c(t^{(m+1)}) \Big\|_{L^2(\O)} + S_\disc(\bar c(t^{(m+1)}))\\
&\leq C_4\delta t+S_\disc(\bar c(t^{(m+1)})).
\end{aligned}
\end{equation}
Plugging this estimate into \eqref{eq-proof-F} and applying Young's inequality with a small parameter $\varepsilon_2>0$, we infer
\begin{equation}\label{eq-proof-FF-1}
\begin{aligned}
&\dsp\sum_{m=0}^{k-1} \delta t^{(m+\frac{1}{2})} \langle f(\bar c^{(m+1)})-f(\Pi_\disc c^{(m+1)}), \Pi_\disc \cE^{(m+1)}\rangle\\
&\leq \frac{\ell_3(1+2\varepsilon_2)}{2\varepsilon_2}\dsp\sum_{m=0}^{k-1} \delta t^{(m+\frac{1}{2})} \Big\|\Pi_\disc \cE^{(m+1)}\Big\|_{L^2(\O)}^2\\
&\quad+\frac{\ell_3 \varepsilon_2}{2}\dsp\sum_{m=0}^{k-1} \delta t^{(m+\frac{1}{2})}\Big(C_4\delta t+S_\disc(\bar c(t^{(m+1)}))\Big)^2.
\end{aligned}
\end{equation}
Let us now turn to the fourth term in the right-hand side of the inequality \eqref{eq-proof-8-u}. We apply again the Cauchy–Schwarz inequality and the discrete Poincar\'e inequality \eqref{ponc-enq} in order to conclude the following formulation, thanks to the Lipschitz continuity assumptions on the operator $\cA$ and the function $g$
\begin{equation}\label{eq-proof-conv-1}
\begin{aligned}
&\dsp\sum_{m=0}^{k-1} \delta t^{(m+\frac{1}{2})}\langle \cA(g(\Pi_\disc c^{(m+1)}),\nabla_\disc c^{(m+1)})-\cA(g(\bar c^{(m+1)}),\nabla\bar c^{(m+1)}), \Pi_\disc \cE^{(m+1)}\rangle\\
&\leq \dsp\sum_{m=0}^{k-1}\delta t^{(m+\frac{1}{2})}\ell_1 \Big\|\Pi_\disc \cE^{(m+1)}\Big\|_{L^2(\O)}
\Big[
\Big\|g(\Pi_\disc c^{(m+1)}) - g(\bar c^{(m+1)})\Big\|_{L^2(\O)}\\
&\quad\quad+ \Big\|\nabla_\disc c^{(m+1)} - \nabla\bar c^{(m+1)} \Big\|_{L^2(\O)^d}
\Big]
\\
&\leq \dsp\sum_{m=0}^{k-1}\delta t^{(m+\frac{1}{2})}\Big[
\ell_1 \ell_2 \Big\|\Pi_\disc c^{(m+1)} - \bar c^{(m+1)}\Big\|_{L^2(\O)} \Big\|\Pi_\disc \cE^{(m+1)}\Big\|_{L^2(\O)}\\
&\quad+ \ell_1 \Big\|\nabla_\disc c^{(m+1)} - \nabla\bar c^{(m+1)} \Big\|_{L^2(\O)^d} \Big\|\Pi_\disc \cE^{(m+1)}\Big\|_{L^2(\O)}
\Big]\\
&\leq \dsp\sum_{m=0}^{k-1}\delta t^{(m+\frac{1}{2})}\Big[
\ell_1\ell_2 \Big\|\Pi_\disc \cE^{(m+1)}\Big\|_{L^2(\O)}^2
+\ell_1\ell_2 \Big\|\Pi_\disc P_\disc \bar c(t^{(m+1)}) - \bar c^{(m+1)}\Big\|_{L^2(\O)} \Big\|\Pi_\disc \cE^{(m+1)}\Big\|_{L^2(\O)} \\
&\quad+\ell_2 C_\disc \Big\|\nabla_\disc \cE^{(m+1)}\Big\|_{L^2(\O)^d}^2
+\ell_2 C_\disc \Big\|\nabla_\disc P_\disc \bar c(t^{(m+1)}) - \nabla\bar c^{(m+1)}\Big\|_{L^2(\O)^d} \Big\|\nabla_\disc \cE^{(m+1)}\Big\|_{L^2(\O)^d}
\Big].
\end{aligned}
\end{equation}
Employing Young's inequalities with small parameters $\varepsilon_3,\varepsilon_4>0$, we obtain  
\begin{equation}\label{eq-proof-conv-2}
\begin{aligned}
&\dsp\sum_{m=0}^{k-1} \delta t^{(m+\frac{1}{2})}\langle \cA(g(\Pi_\disc c^{(m+1)}),\nabla_\disc c^{(m+1)})-\cA(g(\bar c^{(m+1)}),\nabla\bar c^{(m+1)}), \Pi_\disc \cE^{(m+1)}\rangle\\
&\leq \dsp\sum_{m=0}^{k-1} \Big[
\frac{\ell_1^2 \ell_2^2 \varepsilon_2}{2}\Big\|\Pi_\disc P_\disc \bar c(t^{(m+1)}) - \bar c^{(m+1)}\Big\|_{L^2(\O)}^2 
+\frac{\ell_1^2 C_\disc^2 \varepsilon_2}{2}\Big\|\nabla_\disc P_\disc \bar c(t^{(m+1)}) - \nabla\bar c^{(m+1)} \Big\|_{L^2(\O)^d}^2\\
&\quad+\frac{1+2\varepsilon_2\ell_1\ell_2}{2\varepsilon_2}\Big\|\Pi_\disc \cE^{(m+1)}\Big\|_{L^2(\O)}^2
+\frac{1+2\varepsilon_3 C_\disc\ell_2}{2\varepsilon_2}\Big\|\nabla_\disc \cE^{(m+1)}\Big\|_{L^2(\O)^d}^2
\Big].
\end{aligned}
\end{equation}
As in \eqref{eq-proof-Pi-u}, we can estimate the second and third terms on the right-hand side as follows
\[
\begin{aligned}
&\|\Pi_\disc P_\disc \bar c(t^{(m+1)}) - \bar c^{(m+1)}\|_{L^2(\O)}^2 
+\|\nabla_\disc P_\disc \bar c(t^{(m+1)}) - \nabla\bar c^{(m+1)} \|_{L^2(\O)^d}^2\\
&\leq C_4\delta t+S_\disc(\bar c(t^{(m+1)}))
+C_3\delta t+S_\disc(\bar c(t^{(m+1)})).
\end{aligned}
\]
Together with \eqref{eq-proof-conv-2}, this yields
\begin{equation}\label{eq-proof-conv-3}
\begin{aligned}
&\dsp\sum_{m=0}^{k-1} \delta t^{(m+\frac{1}{2})}\langle \cA(g(\Pi_\disc c^{(m+1)}),\nabla_\disc c^{(m+1)})-\cA(g(\bar c^{(m+1)}),\nabla\bar c^{(m+1)}), \Pi_\disc \cE^{(m+1)}\rangle\\
&\leq \dsp\sum_{m=0}^{k-1}\delta t^{(m+\frac{1}{2})} \Big[
\Big( \frac{\ell_1^2 \ell_2^2 \varepsilon_2C_4 + \ell_1^2 C_\disc^2 \varepsilon_2C_3 }{2}\delta t +\frac{\ell_1^2 \ell_2^2 \varepsilon_2 + \ell_1^2 C_\disc^2 \varepsilon_2 }{2} S_\disc(\bar c(t^{(m+1)})) \Big)^2\\
&\quad+\frac{1+2\varepsilon_2\ell_1\ell_2}{2\varepsilon_2}\Big\|\Pi_\disc \cE^{(m+1)}\Big\|_{L^2(\O)}^2
+ \frac{1+2\varepsilon_3 C_\disc\ell_2}{2\varepsilon_2}\Big\|\nabla_\disc \cE^{(m+1)}\Big\|_{L^2(\O)^d}^2
\Big].
\end{aligned}
\end{equation}
By substituting \eqref{eq-proof-9-u}, \eqref{eq-proof-FF-1}, and \eqref{eq-proof-conv-3} into \eqref{eq-proof-8-u}, and using $\sum_{n=0}^{m-1}\delta t^{(n+\frac{1}{2})}\leq T$, we infer
\begin{equation}\label{new-E-u}
\begin{aligned}
&\frac{1}{2}\langle \Pi_\disc  \cE^{(k)}, \Pi_\disc  \cE^{(k)} \rangle
+(\lambda-\frac{1+2\varepsilon_3 C_\disc\ell_2}{2\varepsilon_2})\dsp\sum_{m=0}^{k-1} \delta t^{(m+\frac{1}{2})} \left\| \nabla_\disc  \cE^{(m+1)} \right\|_{L^2(\O)^d}^2 \\
&\leq\frac{(1+2\varepsilon_2)\ell_3 + 2\varepsilon_2\ell_1\ell_2 + 1}{2\varepsilon_2} \dsp\sum_{m=0}^{k-1} \delta t^{(m+\frac{1}{2})}\Big\|\Pi_\disc  \cE^{(m+1)}\Big\|_{L^2(\O)}^2\\
&\quad
+C_5\dsp\sum_{m=0}^{k-1} \delta t^{(m+\frac{1}{2})}(\mathbb M_\disc^{(m+1)})^2
+\Big(S_\disc(\bar c(0))+ \cE_\disc^0\Big)^2,
\end{aligned}
\end{equation}
where $\mathbb M_\disc^{(m+1)}$ is defined by \eqref{eq-MD} and $C_5$ depends on $C_3$ and $C_4$. The direct application of the discrete Gronwall’s Lemma \cite[Lemma 10.5]{B2} to the inequality \eqref{new-E-u} yields
\begin{equation}\label{eq-proof-10-u}
\begin{aligned}
&\frac{1}{2}\Big\| \Pi_\disc \cE^{(k)} \Big\|_{L^2(\O)}^2
+(\lambda-\frac{1+2\varepsilon_3 C_\disc\ell_2}{2\varepsilon_2})\dsp\sum_{m=0}^{k-1} \delta t^{(m+\frac{1}{2})} \left\| \nabla_\disc  E^{(m+1)} \right\|_{L^2(\O)^d}^2 \\
&\leq
\exp\Big\{\frac{T((1+2\varepsilon_2)\ell_3 + 2\varepsilon_2\ell_1\ell_2 + 1)}{2\varepsilon_2}\Big\}\dsp\sum_{m=0}^{k-1} \delta t^{(m+\frac{1}{2})}C_5(\mathbb M_\disc^{(m+1)})^2\\
&\quad +\Big(S_\disc(\bar c(0))+ \cE_\disc^0\Big)^2.
\end{aligned}
\end{equation}
From the triangle inequality, \eqref{new-eq-proof-100}, and \eqref{eq-proof-10-u} combined with the power-of-sums inequality $(a_1+a_2)^{1/2}\leq a_1^{1/2}+a_2^{1/2}$, we obtain 
\begin{equation}\label{eq-proof-11-u}
\begin{aligned}
&\|\Pi_\disc c^{(k)}-\bar c(t^{(k)})\|_{L^2(\O)}\\
&\leq \|\Pi_\disc \cE^{(k)}\|_{L^2(\O)}
+\|\Pi_\disc P_\disc\bar c(t^{(k)})-\bar c(t^{(k)})\|_{L^2(\O)}\\
&\leq 
C_6\Big[\dsp\sum_{m=0}^{k-1} \delta t^{(m+\frac{1}{2})}\mathbb M_\disc^{(m+1)}
+S_\disc(\bar c(0))+ \cE_\disc^0\Big]
+\sqrt{2}S_\disc(\bar c(t^{(k)})), \quad \forall k\in\{1,...,N\},
\end{aligned}
\end{equation}
and  
\begin{equation}\label{eq-proof-13-u}
\begin{aligned}
&\dsp\sum_{m=0}^{N-1}\delta t^{(m+\frac{1}{2})}\Big\|\nabla_\disc c^{(m+1)} - \nabla\bar c(t^{(m+1)})\Big\|_{L^2(\O)^d}\\
&\leq \dsp\sum_{m=0}^{N-1}\delta t^{(m+\frac{1}{2})}\Big\|\nabla_\disc \cE^{(m+1)}\Big\|_{L^2(\O)^d}
+\dsp\sum_{m=0}^{N-1}\delta t^{(m+\frac{1}{2})}\Big\|\nabla_\disc P_\disc\bar c(t^{(m+1)})-\nabla\bar c(t^{(m+1)})\Big\|_{L^2(\O)^d}\\
&\leq 
C_7\Big[\dsp\sum_{m=0}^{k-1} \delta t^{(m+\frac{1}{2})}\mathbb M_\disc^{(m+1)}
+S_\disc(\bar c(0))+ \cE_\disc^0\Big]
+\sqrt{2}\dsp\sum_{m=0}^{k-1} \delta t^{(m+\frac{1}{2})}S_\disc(\bar c(t^{(m+1)})).
\end{aligned}
\end{equation}
Using the triangle inequality and the estimates \eqref{eq-proof-11-u} and \eqref{eq-proof-13-u} implies the desired estimates \eqref{eq-error-1} and \eqref{eq-error-2}, thanks to the Lipschitz-continuity of $\bar c,\nabla\bar c:[0,T]\to H^1(\O)$ to control the quantities $\bar c(\cdot,t)-\bar c(t^{(m+1)})$ and $\nabla\bar c(\cdot,t)-\nabla\bar c(t^{(m+1)})$ for any $t\in (t^{(n)},t^{(n+1)}]$.
\end{proof}	

\begin{remark}
Theorem \ref{thm-err-rm} establishes the rate of convergence based on the mesh size $h$ and the time discretisation. \cite[Remark 2.24]{B1} demonstrates the connection between mesh size and the parameters $C_\disc$, $S_\disc$ and $W_\disc$ for mesh-based gradient discretisation, as follows:
\[
S_\disc(w)\leq h\|w\|_{H^2(\O)}, \quad \mbox{ for all } w \in H^2(\O),
\]
\[
W_\disc(\bxi)\leq h\|\bxi\|_{H^1(\O)^d}, \quad \mbox{ for all } \bxi \in H^1(\O)^d.
\]

\end{remark}


\section{Numerical results}\label{sec-numerical}
We consider here the generalised Burgers-Fisher (GBF) equation as an application of the time-dependent convection-diffusion-reaction model \eqref{eq-conv-1}--\eqref{eq-conv-3}, with setting
\[
\cA(u,\bphi):=\dsp\sum_{i=1}^d u\varphi_i,\quad \forall u\in L^2(0,T;L^2(\O)), \bphi=(\varphi_1,...,\varphi_d) \in L^2(0,T;L^2(\O)^d),
\]
\[
g(\bar c)= \bar c^p,\; f(\bar c)=\bar c(1-\bar c^p), \mbox{ and } \lambda=1.
\] 
where $p$ is a positive constant, over the domain $\Omega=[0,1]^2$. The exact solution for such a model is given by \cite{N1,N2,N3}
 \begin{equation}\label{eq-GBF-exact}
              \bar c(x,y,t)=\left[\frac{1}{2}+\frac{1}{2}\tanh\left(\frac{-2p}{4(p+1)}\left(x+y-(\frac{4+2(p+1)^2}{2(p+1)})t\right)\right)\right]^\frac{1}{p},
\end{equation}
 where the initial and the Dirichlet boundary conditions are extracted accordingly.
To assess the validity of the gradient scheme, we discretise the GBF equation by the scheme \eqref{eq-gs} with the gradient discretisation corresponding to the hybrid mimetic mixed (HMM) method (a unified framework that combines three distinct schemes: the hybrid finite volume method, the mixed finite volume method, and the (mixed-hybrid) mimetic finite difference method. \cite[Chapter 13]{B1}. Notably, it can be applied on general meshes without requiring orthogonality assumptions. For the sake of completeness we briefly recall the definition of this gradient discretisation. Let $\mathcal T=(\mesh,\mathcal F,\mathcal V)$ be the polytopal mesh of the spatial domain $\O$ described in \cite[Definition 7.2]{B1}, where  $\mesh$ is the set of polygonal cells $K$, $\cF$ is the set of edges $\edge$, and $\mathcal V$ is a set of points $(\x_K)_{K\in\mesh}$. The discrete space $X_{\disc,0}$, and operators $\Pi_\disc$ and $\nabla_\disc$ are given by:

\begin{itemize}
\item $
X_{\disc,0}=\{ \varphi=((\varphi_{K})_{K\in \mesh}, (\varphi_{\sigma})_{\sigma \in \cF})\;:\; \varphi_{K},\, \varphi_{\sigma} \in \RR \mbox{ and }\varphi_\edge=0,\; \forall \edge \in \cF \cap \dr\O \}
$,
\item $\Pi_\disc \varphi=\varphi_K$ on $K$, for all $\varphi\in X_{\disc,0}$ and all $K\in\mesh$, and for a.e. $\x\in K$,
\item $\nabla_\disc \varphi=\dsp\frac{1}{|K|}\dsp\sum_{\edge\in\cF_K}|\edge|\bfn_{K,\edge}+S_{K,\edge}(\varphi)$, for all $\varphi\in X_{\disc,0}$, all $K\in\mesh$, and all $\edge\in\cF_K$ ( the
set of edges of $K$), where $S_{K,\edge}$ is a stabilisation term depending on a cell $K$ and its edges, and $\bfn_{K,\edge}$ is is the unit vector normal to $\edge$ outward to cell $K$.
\end{itemize}

For computational purpose, we can rewrite the HMM method as a conservative discretisation (the mixed finite volume method). To do so, we consider the following linear flux 
\begin{align*}
\sum_{\sigma \in \cF_K}|\sigma| F_{K,\sigma}(\varphi)
(v_K-v_\sigma)={}&\int_K \nabla_\disc \varphi\cdot\nabla_\disc v \ud \x, \mbox{ for all $K \in \mesh$ and all $\varphi,v\in X_{\disc,0}$}.
\end{align*} 

Therefore, the mixed finite volume method of the generalised Burgers-Fisher equation is, for all $K\in\mesh$ and for all $m=0,...,N-1$
\begin{subequations}\label{mont-obs}
\[
\begin{aligned}
&\frac{|K|}{\delta t^{(n+\frac{1}{2})}}\left( c_K^{(m+1)}-c_K^{(m)} \right)
+\sum_{\sigma \in \cF_K} F_{K,\sigma}(c^{(m+1)})\\
&+({c_K^{(m+1)}}^p, {c_K^{(m+1)}}^p)\cdot \frac{1}{|K|}\dsp\sum_{\edge\in\cF_K}|\edge|\bfn_{K,\edge}c_\edge^{(m+1)}
=c_K^{(m+1)}\left( 1-{c_K^{(m+1)}}^p \right),
\end{aligned}
\]
\[
F_{K,\edge}(c^{(m+1)})+F_{L,\edge}(c^{(m+1)})=0, \quad \mbox{ for all } \sigma \in \cF_K \cap \cF_L, K\neq L,
\]
\[
c_\sigma^{(m+1)}=0, \quad \mbox{ for all } \sigma \in \cF \cap \dr\O,
\]
\[
c^{(0)}=c_{\rm ini}(\x_K,0), \quad \mbox{ for all } K\in \mesh.
\]
\end{subequations}

A key advantage of the HMM method is its flexibility in handling different types of meshes across multiple spatial dimensions, with minimal restrictions on control volumes. To demonstrate its effectiveness, we examine two examples of generalised GBF model using the HMM method with $p=2$ and $p=0.5$. In both cases, the method is tested on four distinct types of general meshes introduced in \cite{HH08}. Examples of these meshes, each with varying cell counts, are shown in Figure \ref{fig-mesh}. Mesh type (b) consists primarily of hexagonal cells, while type (c) is a conforming distorted quadrangular mesh. Unlike the triangular mesh (type (a)), both types (b) and (c) are skewed in different directions across the domain and contain elements of various shapes. Type (d) is a locally refined, non-conforming rectangular mesh with a refinement concentrated in the lower-left corner.  As demonstrated in \cite{HH08}, this selection of mesh types is representative of generic meshes commonly encountered in practical applications. 
Simulations are conducted for all test cases up to $T=1$, using time steps $\delta t=0.01$, $0.005$, $0.00025$, and $0.00125$ corresponding to increasingly finer meshes. 

The resulting errors and the corresponding orders of convergence with respect to the mesh size $h$ are presented in Tables \ref{tab1}-\ref{tab8} for both test cases. Results for the triangular mesh are given in Tables \ref{tab1} and \ref{tab5}, for the hexagonal mesh in Tables \ref{tab2} and \ref{tab6}, for the conforming distorted quadrangular mesh in Tables \ref{tab3} and \ref{tab7}, and in Tables \ref{tab4} and \ref{tab8} for the locally refined non-conforming rectangular mesh.
 Our findings indicate that, for the triangular, hexagonal, and locally refined non-conforming rectangular meshes, the convergence rates of the $L_2$ relative errors on $\bar c$ and $\nabla\bar c$ are approximately $1$. These results align with expected behaviour for lower-order methods, such as the HMM. Interestingly, for the conforming distorted quadrangular mesh (Tables \ref{tab3} and  \ref{tab7}), the order of convergence for $L_2$ relative errors on $\bar c$ and $\nabla\bar c$ exceeds $1$, indicating a potential super-convergence property of the HMM in this case.
\begin{figure}[ht]
	\begin{center}
	\begin{tabular}{cc}
	\includegraphics[width=0.40\linewidth]{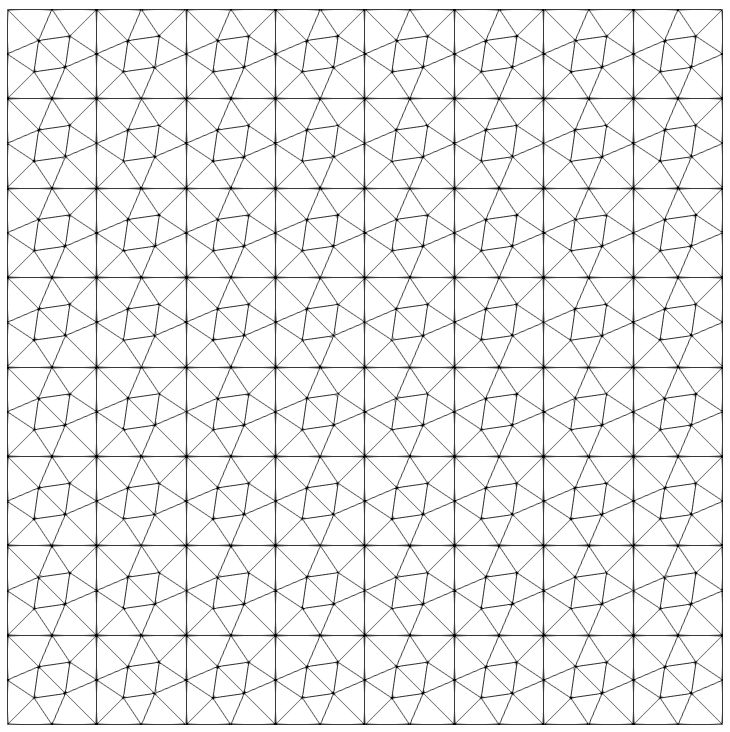}&
	\includegraphics[width=0.40\linewidth]{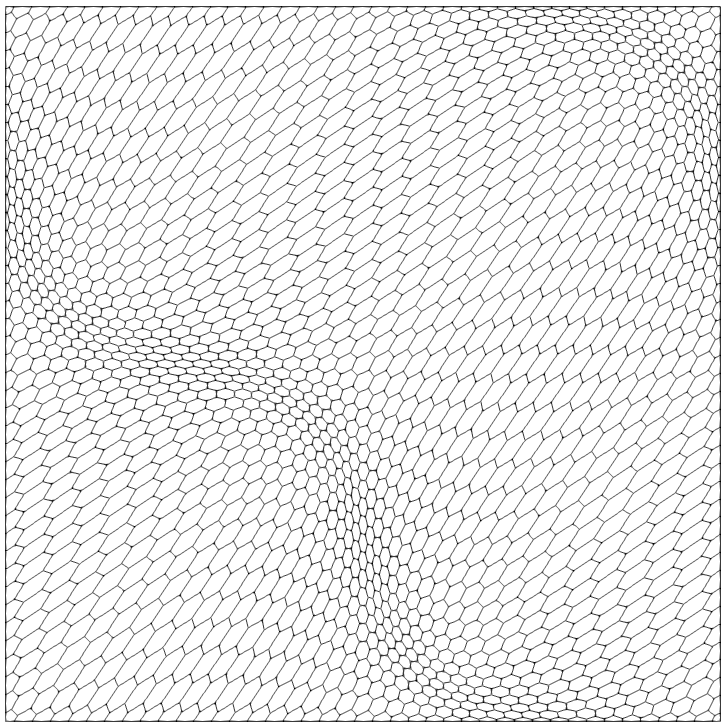}\\
	\texttt{(a) triangular mesh}&\texttt{(b) Hexagonal mesh}\\
	\includegraphics[width=0.40\linewidth]{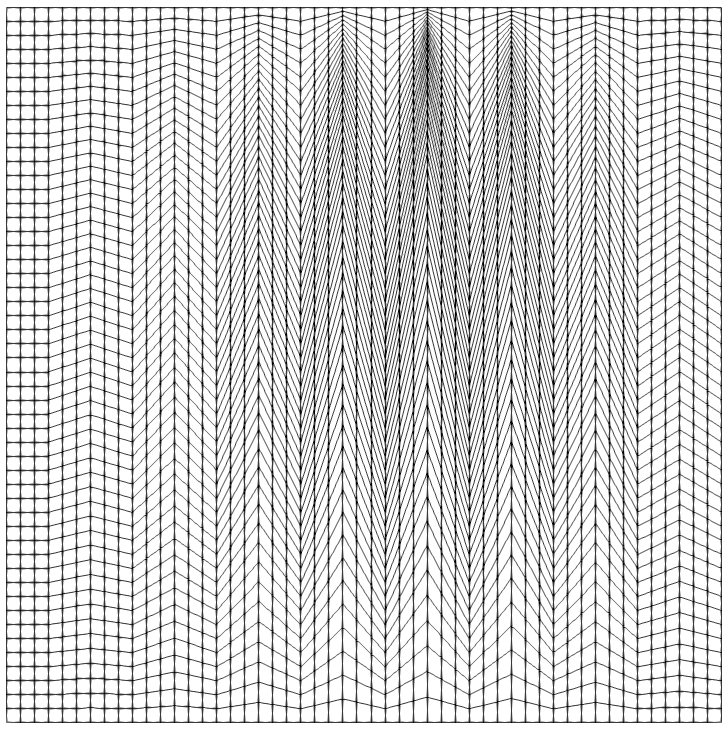}&
	\includegraphics[width=0.40\linewidth]{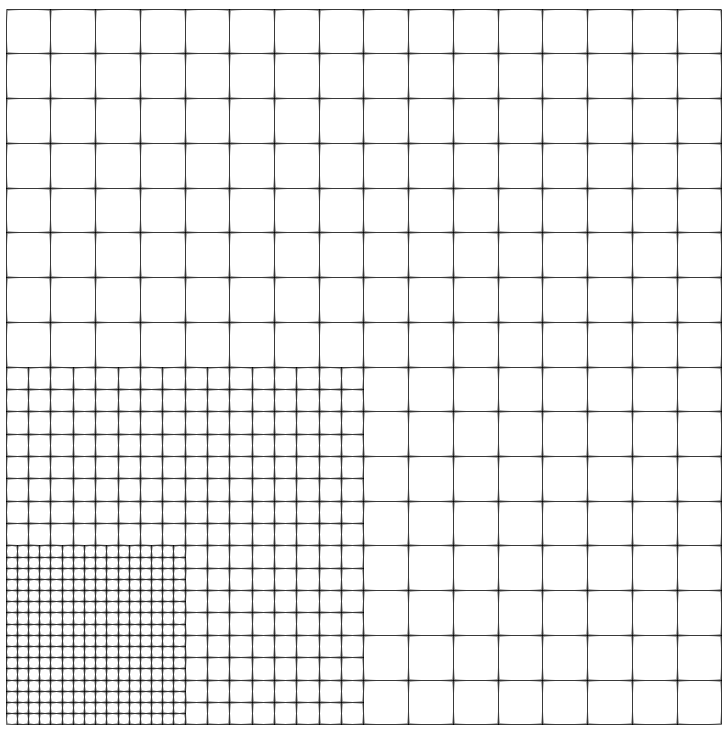}\\
	\texttt{(c) Distroted mesh }&\texttt{(d) Locally refined and non conforming mesh }\\
	\end{tabular}
	\end{center}
	\caption{Samples of the various 2D meshes}
	\label{fig-mesh}
\end{figure}
\begin{table}[]
\begin{tabular}{c c c c c}
\hline
$h$ &
$\frac{\| \bar c(\cdot,T) - \Pi_\disc c^N\|_{L^{2}(\O)}}{\|\bar c(\cdot,T)\|_{L^2(\O)}}$&
rate&
$\frac{\| \nabla \bar c(\cdot,T) - \nabla_\disc c^N\|_{L^{2}(\O)^2}}{\|\nabla\bar c(\cdot,T)\|_{L^2(\O)^2}}$&
rate 
\\ \hline
0.1250000&
0.0000441&
--&
0.0115277&
--
\\ 
0.0625000&
0.0000183&
1.2661441&
0.0057652&
0.9996583
\\ 
0.0312500&
0.0000083&
1.1460176&
0.0028830&
0.9997962
\\ 
0.01562500&
0.00000393&
1.07651504&
0.00144161&
0.99988925
\\ \hline
\end{tabular}
\caption{The relative errors on $\bar c$ and $\nabla\bar c$ and the convergence rates w.r.t.  $h$ the size of triangular mesh with $p=2$.}
\label{tab1}   
\end{table}
\begin{table}[]
\begin{tabular}{c c c c c}
\hline
$h$ &
$\frac{\| \bar c(\cdot,T) - \Pi_\disc c^N\|_{L^{2}(\O)}}{\|\bar c(\cdot,T)\|_{L^2(\O)}}$&
rate&
$\frac{\| \nabla \bar c(\cdot,T) - \nabla_\disc c^N\|_{L^{2}(\O)^2}}{\|\nabla\bar c(\cdot,T)\|_{L^2(\O)^2}}$&
rate 
\\ \hline   
0.1297130&
0.0000374&
--&
0.0026991&
--
\\ 
0.0657364&
0.0000168&
1.1779549&
0.0011768&
1.2213457
\\ 
0.0329800&
0.0000079&
1.0922849&
0.0005337&
1.1462540
\\ 
0.0165040&
0.0000038&
1.0454178&
0.0002529&
1.0790024
\\ \hline
\end{tabular}
\caption{The relative errors on $\bar c$ and $\nabla\bar c$ and the convergence rates w.r.t. $h$ the size of hexagonal mesh with $p=2$.}
\label{tab2}   
\end{table}
\begin{table}[]
\begin{tabular}{c c c c c}
\hline
$h$ &
$\frac{\| \bar c(\cdot,T) - \Pi_\disc c^N\|_{L^{2}(\O)}}{\|\bar c(\cdot,T)\|_{L^2(\O)}}$&
rate&
$\frac{\| \nabla \bar c(\cdot,T) - \nabla_\disc c^N\|_{L^{2}(\O)^2}}{\|\nabla\bar c(\cdot,T)\|_{L^2(\O)^2}}$&
rate 
\\ \hline
0.1665956&
0.0000225&
--&
0.0021771&
--
\\ 
0.1115566&
0.0000115&
1.6737405&
0.0010651&
1.7827569
\\ 
0.0838522&
0.0000056&
2.5259207&
0.0005446&
2.3493490
\\ 
0.06717051&
0.00000265&
3.35803175&
0.00029008&
2.83994585
\\ \hline
\end{tabular}
\caption{The relative errors on $\bar c$ and $\nabla\bar c$ and the convergence rates w.r.t.  $h$ the size of distorted mesh with $p=2$.}
\label{tab3}   
\end{table}
   \begin{table}[]
\begin{tabular}{c c c c c}
\hline
$h$ &
$\frac{\| \bar c(\cdot,T) - \Pi_\disc c^N\|_{L^{2}(\O)}}{\|\bar c(\cdot,T)\|_{L^2(\O)}}$&
rate&
$\frac{\| \nabla \bar c(\cdot,T) - \nabla_\disc c^N\|_{L^{2}(\O)^2}}{\|\nabla\bar c(\cdot,T)\|_{L^2(\O)^2}}$&
rate 
\\ \hline   
0.1767767&
0.0000763&
--&
0.0022502&
--
\\ 
0.0883883&
0.0000260&
1.5517510&
0.0010401&
1.1133085
\\ 
0.0441948&
0.0000101&
1.2115446&
0.0004992&
1.0590740
\\ 
0.0220971&
0.0000044&
1.0619295&
0.0002445&
1.0295922
\\ \hline
\end{tabular}
\caption{The relative errors on $\bar c$ and $\nabla\bar c$ and the convergence rates w.r.t. $h$ the size of a locally refined
non-conforming rectangular mesh with $p=2$.}
\label{tab4}   
\end{table}

\begin{table}[]
\begin{tabular}{c c c c c}
\hline
$h$ &
$\frac{\| \bar c(\cdot,T) - \Pi_\disc c^N\|_{L^{2}(\O)}}{\|\bar c(\cdot,T)\|_{L^2(\O)}}$&
rate&
$\frac{\| \nabla \bar c(\cdot,T) - \nabla_\disc c^N\|_{L^{2}(\O)^2}}{\|\nabla\bar c(\cdot,T)\|_{L^2(\O)^2}}$&
rate 
\\ \hline
0.1250000&
0.0000471&
--&
0.0010114&
--
\\ 
0.0625000&
0.0000223&
1.0813452&
0.0005022&
1.0100528
\\ 
0.0312500&
0.0000108&
1.0382011&
0.0002503&
1.0047728
\\ 
0.01562500&
0.00000536&
1.01845908&
0.00012493&
1.00232597
\\ \hline
\end{tabular}
\caption{The relative errors on $\bar c$ and $\nabla\bar c$ and the convergence rates w.r.t.  $h$ the size of triangular mesh with $p=0.5$.}
\label{tab5}   
\end{table}
\begin{table}[]
\begin{tabular}{c c c c c}
\hline
$h$ &
$\frac{\| \bar c(\cdot,T) - \Pi_\disc c^N\|_{L^{2}(\O)}}{\|\bar c(\cdot,T)\|_{L^2(\O)}}$&
rate&
$\frac{\| \nabla \bar c(\cdot,T) - \nabla_\disc c^N\|_{L^{2}(\O)^2}}{\|\nabla\bar c(\cdot,T)\|_{L^2(\O)^2}}$&
rate 
\\ \hline   
0.1297130&
0.0000468&
--&
0.0006397&
--
\\ 
0.0657364&
0.0000224&
1.0856872&
0.0003078&
1.0762592
\\ 
0.0329800&
0.0000109&
1.0431010&
0.0001504&
1.0381276
\\ 
0.0165040&
0.0000054&
1.0217026&
0.0000743&
1.0185651
\\ \hline
\end{tabular}
\caption{The relative errors on $\bar c$ and $\nabla\bar c$ and the convergence rates w.r.t. $h$ the size of hexagonal mesh with 
$p=0.5$.}
\label{tab6}   
\end{table}

\begin{table}[]
\begin{tabular}{c c c c c}
\hline
$h$ &
$\frac{\| \bar c(\cdot,T) - \Pi_\disc c^N\|_{L^{2}(\O)}}{\|\bar c(\cdot,T)\|_{L^2(\O)}}$&
rate&
$\frac{\| \nabla \bar c(\cdot,T) - \nabla_\disc c^N\|_{L^{2}(\O)^2}}{\|\nabla\bar c(\cdot,T)\|_{L^2(\O)^2}}$&
rate 
\\ \hline
0.1665956&
0.0000398&
--&
0.0003984&
--
\\ 
0.1115566&
0.0000200&
1.7141383&
0.0005947&
1.7361804
\\ 
0.0838522&
0.0000099&
2.4545544&
0.0001484&
2.4228989
\\ 
0.06717051&
0.00000489&
3.20261241&
0.00007470&
3.09592558
\\ \hline
\end{tabular}
\caption{The relative errors on $\bar c$ and $\nabla\bar c$ and the convergence rates w.r.t.  $h$ the size of distorted mesh with $p=0.5$.}
\label{tab7}   
\end{table}
\begin{table}[]
\begin{tabular}{c c c c c}
\hline
$h$ &
$\frac{\| \bar c(\cdot,T) - \Pi_\disc c^N\|_{L^{2}(\O)}}{\|\bar c(\cdot,T)\|_{L^2(\O)}}$&
rate&
$\frac{\| \nabla \bar c(\cdot,T) - \nabla_\disc c^N\|_{L^{2}(\O)^2}}{\|\nabla\bar c(\cdot,T)\|_{L^2(\O)^2}}$&
rate 
\\ \hline   
0.1767767&
0.0000596&
--&
0.0006732&
--
\\ 
0.0883883&
0.0000251&
1.2469915&
0.0003152&
1.0949167
\\ 
0.0441948&
0.0000115&
1.1242784&
0.0001522&
1.0499478
\\ 
0.0220971&
0.0000552&
1.0619295&
0.0000748&
1.0256775
\\ \hline
\end{tabular}
\caption{The relative errors on $\bar c$ and $\nabla\bar c$ and the convergence rates w.r.t. $h$ the size of a locally refined
non-conforming rectangular mesh with $p=0.5$.}
\label{tab8}   
\end{table}

\section{Conclusion}
A time-dependent convection-diffusion-reaction model was analysed within the general framework of the GDM. We established a novel error estimate in appropriate discrete norms, providing a rigorous foundation for the convergence of numerical approximations. To validate our theoretical findings, we applied the HMM method to the GBF model as a prototype example, conducting numerical experiments on four distinct types of general meshes. The computed errors and corresponding convergence orders with respect to the mesh size were reported. Our results demonstrate that the method achieves first-order convergence, even on severely distorted meshes, aligning well with the theoretical convergence rates.



\bibliographystyle{siam}
\bibliography{Ref}

\end{document}


$\cA:\RR \times \RR^d \times R \to \RR$ is defined by 
\[
\cA(u,\bvarphi,w)=\dsp\int_\O\dsp\sum_{i=1}^d u \varphi_i w \ud \x, \quad \forall u,w\in \RR \mbox{ and } \bvarphi=(\varphi_1,...,\varphi_d)\in \RR^d.
\]